\documentclass{article}
\usepackage[utf8]{inputenc}
\usepackage{tikz}
\usepackage{amsmath}
\usepackage{amsthm}

\newtheorem{theorem}{Theorem}[section]
\newtheorem{corollary}{Corollary}[theorem]
\newtheorem{lemma}[theorem]{Lemma}

\title{Counting Graph Homomorphisms Involving Complete Graphs}
\author{Jeffrey Beyerl, Cameron Sharpe}
\date{May 5, 2017}

\begin{document}

\maketitle

\begin{center}
\textbf{Abstract}
\end{center}

In the branch of mathematics known as graph theory, graphs are considered as a set of points, called \textit{vertices}, with connections between these points, called \textit{edges}. The purpose of this paper is to study mappings between two graphs that have certain desirable properties, called \textit{graph homomorphisms}, and the probability of such a mapping occurring. By using notions from graph theory and combinatorics, in this paper we prove several new theorems that place bounds on this probability for  certain common classes of graphs such as $K_n$, and show that isolated vertices may safely be ignored.

\section{Introduction}
For any graph $G$, we denote the set of its vertices as $V(G)$ and the set of edges as $E(G)$. For the purposes of this paper, all graphs are assumed to be \textit{simple, undirected} graphs. That is, each pair of vertices may have at most one edge associated with it, and such a pair of vertices are called the \textit{endpoints} of the edge. All edges are uniquely identified using their endpoints, so ${v_1}{v_2}$ specifies an edge between the vertices $v_1$ and $v_2$. The \textit{degree} of a vertex $v$, denoted $deg(v)$, is defined as the number of edges that have $v$ as an endpoint. If $deg(v)=0$, we say that $v$ is an \textit{isolated vertex}. Two vertices are considered \textit{adjacent} if there is an edge between them. A set of mutually non-adjacent vertices is called a \textit{stable set} (or sometimes an \textit{independent set}) of the graph. Special classes of graphs include the \textit{complete} graph, in which there are no stable sets, as well as the \textit{path} and \textit{cycle} graphs, examples of which are shown in Figure 1. 

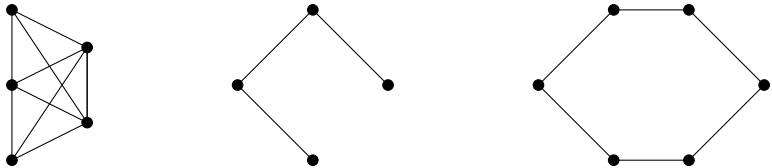
\begin{figure}[h]
    \centering
    \begin{tikzpicture}
\filldraw[black] (-5,-1) circle (2pt);
\filldraw[black] (-5,0) circle (2pt);
\filldraw[black] (-5,1) circle (2pt);
\filldraw[black] (-4,.5) circle (2pt);
\filldraw[black] (-4,-.5) circle (2pt);
\draw[black] (-5,-1) -- (-5,0) -- (-5,1) -- (-4,.5) -- (-4,-.5) -- cycle;
\draw[black] (-5,-1) -- (-4,.5);
\draw[black] (-5,1) -- (-4,-.5);
\draw[black] (-5,0) -- (-4,.5) -- (-4,-.5) -- cycle;

\filldraw[black] (0,0) circle (2pt);
\filldraw[black] (-1,1) circle (2pt);
\filldraw[black] (-2,0) circle (2pt);
\filldraw[black] (-1,-1) circle (2pt);
\draw[black] (-1,-1) -- (-2,0) -- (-1,1) -- (0,0);

\filldraw[black] (2,0) circle (2pt);
\filldraw[black] (3,1) circle (2pt);
\filldraw[black] (4,1) circle (2pt);
\filldraw[black] (5,0) circle (2pt);
\filldraw[black] (4,-1) circle (2pt);
\filldraw[black] (3,-1) circle (2pt);
\draw[black] (2,0)--(3,1)--(4,1)--(5,0)--(4,-1)--(3,-1)--cycle;
    \end{tikzpicture}
    \caption{From left to right: the complete graph on five vertices, $K_5$; the path graph on four vertices, $P_4$; and the cycle graph on six vertices, $C_6$}
    \label{fig:1}
\end{figure}

Given two graphs, $G$ and $F$, a \textit{graph homomorphism} from $G$ to $F$ is defined as a mapping $\varphi:G\rightarrow F$ such that ${v_1}{v_2}\in E(G)$ implies ${\varphi(v_1)}{\varphi(v_2)}\in E(F)$. That is, a graph homomorphism is a mapping in which edges are preserved. For more information on graph theory, see any text on the subject, such as \cite{golumbic}. Throughout this paper, $M$ will represent the set of all possible mappings from $G$ to $F$; $I$ will be those mappings which are injective, while $H$ will represent the set of all such mappings that are homomorphisms. The primary object of study for this paper is the \textit{homomorphism density}, defined as the probability of a homomorphism occurring between two graphs, and denoted by $t(G,F)=\frac{|H|}{|M|}$, first encountered in \cite{diaconis}. Over the course of our investigation, theorems have been produced that characterize when $t(G,F)=1$, as well as provide a set of inequalities pertaining to the complete graphs $K_n$. We also prove that the addition of an isolated vertex to the domain does not alter the value of $t(G,F)$. For a more comprehensive presentation of graph homomorphisms, please see \cite{godsil}.
\section{Main Theorems}
We begin our investigation of the homomorphism density with the following question: under what circumstances does $t(G,F)=1$? That is, when is a function from $G$ to $F$ guaranteed to be a homomorphism? It is clear that, if $G$ has no edges to preserve, then every mapping $\varphi:G\rightarrow F$ will be a graph homomorphism. Less clear is whether this is the \textit{only} way to guarantee every mapping from $G$ to $F$ is a homomorphism. However, if we consider any two graphs, $G$ and $F$, where $G$ has at least one edge, then we can easily obtain a mapping $\varphi:G\rightarrow F$ such that two endpoints of an edge in $G$ map to the same vertex in $F$. Since we require all graphs to be simple, this mapping will fail to be a homomorphism, and so we have established our first theorem:
\begin{theorem}
For two graphs $F$ and $G$, $t(F,G)=1$ if and only if $F=(K_n)^c$ for some natural number $n$.
\end{theorem}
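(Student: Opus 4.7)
The statement is an ``if and only if,'' so the plan is to handle the two directions separately, using the contrapositive for the harder direction.

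For the easy direction ($\Leftarrow$), I would assume $F = (K_n)^c$, which is the empty graph on $n$ vertices, i.e.\ $E(F) = \emptyset$. The defining condition of a homomorphism $\varphi:F \to G$ is that $v_1 v_2 \in E(F)$ implies $\varphi(v_1)\varphi(v_2) \in E(G)$. Since $E(F)$ is empty, this implication is vacuously satisfied by every mapping $\varphi: V(F) \to V(G)$. Hence $H = M$, so $t(F,G) = |H|/|M| = 1$.

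For the reverse direction ($\Rightarrow$), I would prove the contrapositive: if $F$ is not of the form $(K_n)^c$, then $t(F,G) < 1$. Assuming $F \neq (K_n)^c$ means $F$ contains at least one edge $v_1 v_2 \in E(F)$. The idea (already telegraphed in the paragraph preceding the theorem) is to construct a single mapping $\varphi: F \to G$ which fails to be a homomorphism, showing $|H| < |M|$. Specifically, pick any vertex $w \in V(G)$ (we may assume $V(G) \neq \emptyset$, else $|M| = 0$ and $t(F,G)$ is undefined) and define $\varphi$ to send both $v_1$ and $v_2$ to $w$, with the other vertices of $F$ sent arbitrarily. Then $\varphi(v_1)\varphi(v_2) = ww$, which would have to lie in $E(G)$ for $\varphi$ to be a homomorphism. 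But since $G$ is simple, it has no loops, so $ww \notin E(G)$, and $\varphi \notin H$. Thus $H \subsetneq M$, giving $t(F,G) < 1$.

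The only subtle point, which I would flag but not belabor, is the degenerate boundary case: we need $V(G)$ to be nonempty so that the collapsing mapping $\varphi$ exists and so that $|M| > 0$ (otherwise $t(F,G)$ is $0/0$). This is a standing assumption on the density being well-defined rather than a real obstacle. Beyond that, the argument is entirely structural — no counting or combinatorial identities are required — so I do not anticipate any substantive difficulty.
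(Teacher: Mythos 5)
Your proof is correct and follows essentially the same argument the paper gives (informally, in the paragraph preceding the theorem statement): the backward direction is the vacuous-implication observation, and the forward direction collapses the two endpoints of an edge onto a single vertex and uses simplicity of the codomain to rule out a loop. Your explicit attention to the degenerate case $V(G)=\emptyset$ is a small refinement the paper omits, but it does not change the approach.
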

\par With this theorem in mind, it is natural to suspect that there are cases for which $t(G,F)=0$, and indeed this is the case. Consider the example of $K_4$ and $P_3$, as shown in Figure 2. Since $F$ has only three vertices, at least two of the vertices from $G$ must map to the same vertex in $F$ for any $\varphi:G\rightarrow F$. Therefore, there are no possible homomorphisms from $G$ to $F$, and so $t(G,F)=0$.

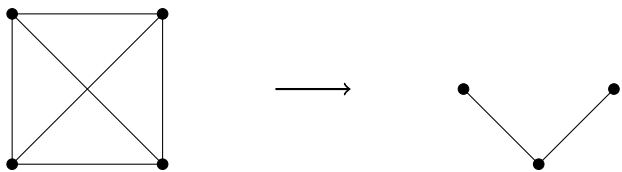
\begin{figure}[h]
    \centering
    \begin{tikzpicture}
        \filldraw[black] (-4,1) circle (2pt);
        \filldraw[black] (-4,-1) circle (2pt);
        \filldraw[black] (-2,1) circle (2pt);
        \filldraw[black] (-2,-1) circle (2pt);
        \draw[black] (-4,1) -- (-2,1) -- (-2,-1) -- (-4,-1) -- cycle;
        \draw[black] (-4,1) -- (-2,-1);
        \draw[black] (-2,1) -- (-4,-1);
        \draw[black,thick,->] (-.5,0) -- (.5,0);
        \filldraw[black] (2,0) circle (2pt);
        \filldraw[black] (3,-1) circle (2pt);
        \filldraw[black] (4,0) circle (2pt);
        \draw[black] (2,0) -- (3,-1) -- (4,0);
    \end{tikzpicture}
    \caption{There is no mapping from $K_4$ on the left to $P_3$ that will preserve all the edges of $K_4$.}
    \label{fig:2}
\end{figure}

After establishing that we can make $t(G,F)$ equal to one or zero, the next question is whether this probability can be made arbitrarily close to those bounds. We found this to be a very hard problem to tackle, as graphs in general may have an astonishing array of vertices and edges, even with the restriction to simple and undirected graphs. Thus, for our purposes, we chose to focus on homomorphism densities involving the class of complete graphs. To do this, we started with a general graph $G$ and let $F$ be the complete graph on $n$ vertices, $K_n$. In investigating the relationship between injective functions and homomorphisms in such a case, the following two lemmas were developed:

\begin{lemma}
Let $G$ be any graph. If $\varphi:K_n \rightarrow G$ is a homomorphism, then $\varphi$ is injective.
\end{lemma}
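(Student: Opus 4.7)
The plan is to argue by contradiction, exploiting the fact that every pair of distinct vertices in $K_n$ is joined by an edge. So I would begin by supposing that $\varphi: K_n \rightarrow G$ is a homomorphism that fails to be injective, and fixing distinct vertices $u, v \in V(K_n)$ with $\varphi(u) = \varphi(v)$.

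Next I would use completeness to note that $uv \in E(K_n)$, which is the whole point of choosing $K_n$ as the domain: there is no way for two distinct vertices to be non-adjacent, so collisions under $\varphi$ are forced to occur on actual edges. Applying the homomorphism property to this edge, I get $\varphi(u)\varphi(v) \in E(G)$.

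The contradiction then comes from the simple-graph hypothesis stated in the introduction: since $\varphi(u) = \varphi(v)$, the "edge" $\varphi(u)\varphi(v)$ would be a loop at a single vertex, and the paper has explicitly restricted attention to simple undirected graphs, which contain no loops. Thus no such pair $u,v$ can exist, and $\varphi$ must be injective.

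I do not anticipate a genuine obstacle here — the argument is essentially a one-line contrapositive once the definitions are unpacked. The only thing to be a little careful about is making explicit where the "simple" assumption enters, since that is the hypothesis doing all the work; without it, a homomorphism into a graph with a loop could collapse an edge of $K_n$ to that loop and remain a valid homomorphism. I would therefore write the proof so that the appeal to simplicity is flagged clearly, rather than buried.
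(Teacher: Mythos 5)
Your proposal is correct and follows essentially the same route as the paper's proof: a contradiction argument using completeness of $K_n$ to force the collided pair onto an edge, and the simple-graph assumption to rule out a loop at $\varphi(u)$. Your explicit flagging of where simplicity does the work is a nice touch, but the underlying argument is identical.
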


\begin{proof}
Suppose $\varphi:K_n \rightarrow G$ is a homomorphism such that $\varphi$ is not injective. Then there exist vertices $u,v\in V(K_n)$ such that $\varphi(u)=\varphi(v)$. Since $K_n$ is complete, $uv\in E(K_n)$ for every $u,v\in V(K_n)$. However, since G is simple, $\varphi(u)\varphi(v)=\varphi(u)\varphi(u) \not\in E(G)$. Thus, $\varphi$ is not a homomorphism and a contradiction is reached.
\end{proof}

\begin{lemma}
Let $G$ be any graph. If $\varphi:G\rightarrow K_n$ is injective, then $\varphi$ is a homomorphism.
\end{lemma}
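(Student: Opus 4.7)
The plan is to proceed directly from the definitions, leveraging the defining feature of $K_n$: every pair of distinct vertices is joined by an edge. So the only way an edge of $G$ could fail to be preserved is if its two endpoints are sent to the same vertex of $K_n$, and injectivity rules this out.

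More concretely, I would start by fixing an arbitrary edge $v_1 v_2 \in E(G)$ and checking the homomorphism condition on it. Since $v_1 v_2$ is an edge in a simple graph, we have $v_1 \neq v_2$. By injectivity of $\varphi$, it follows that $\varphi(v_1) \neq \varphi(v_2)$. Since both images lie in $V(K_n)$ and $K_n$ is the complete graph on $n$ vertices, every pair of distinct vertices of $K_n$ forms an edge, so $\varphi(v_1)\varphi(v_2) \in E(K_n)$. As $v_1 v_2$ was arbitrary, $\varphi$ preserves all edges and is therefore a homomorphism.

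There is essentially no obstacle here: the argument is a one-line unwinding of definitions, and the only subtlety worth stating explicitly is why we may assume $v_1 \neq v_2$, which is forced by $G$ being simple (loopless). It is also worth noting that this lemma is in some sense dual to the preceding one: Lemma~2 showed homomorphisms out of $K_n$ must be injective, whereas here injective maps into $K_n$ are automatically homomorphisms. Together these two facts will presumably be used in the subsequent analysis of $t(G, K_n)$ by identifying $|H|$ with $|I|$ (or a closely related quantity) in the appropriate direction.
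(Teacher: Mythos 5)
Your proof is correct and follows essentially the same route as the paper's: both use injectivity to conclude that endpoints of any edge of $G$ have distinct images, then invoke completeness of $K_n$ to conclude the image pair is an edge. Your explicit remark that simplicity of $G$ forces $v_1 \neq v_2$ is a small but welcome clarification the paper leaves implicit.
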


\begin{proof}
Assume $\varphi:G\rightarrow K_n$ is injective, then $\varphi(u)\neq \varphi(v)$ for any $u\neq v \in V(G)$. Since $K_n$ is complete, $\varphi(u)\varphi(v)\in E(K_n)$ for every $u\neq v \in V(G)$. Therefore, $uv\in E(G) \Rightarrow \varphi(u)\varphi(v)\in E(K_n)$ and so it must be the case that $\varphi$ is a homomorphism.
\end{proof}

From Lemma 2.2, we know that, for any complete graph $K_n$ and any fixed but arbitrary graph $F$, all homomorphisms from $K_n$ to $F$ must also be injective. That is, $H\subseteq I$. In particular, this means that there must always be at least as many injective mappings from $K_n$ to $F$ as there are homomorphisms, and so our next theorem is established:

\begin{theorem}
Let $n$ be a natural number $n$, and let $F$ be a fixed, arbitrary graph. Then $t(K_n,F)\leq\frac{|I|}{|M|}$.
\end{theorem}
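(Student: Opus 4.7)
The plan is to leverage Lemma 2.2 essentially verbatim, since the statement is really just a repackaging of the containment $H \subseteq I$ in the language of densities. First I would fix the notation: let $M$, $I$, and $H$ denote respectively the sets of all mappings, all injective mappings, and all homomorphisms from $K_n$ to $F$, so that by definition $t(K_n, F) = |H|/|M|$.

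Next I would invoke Lemma 2.2 to conclude that every $\varphi \in H$ is injective, so $H \subseteq I$. Taking cardinalities yields $|H| \leq |I|$, and dividing through by the (positive) quantity $|M|$ gives the desired bound
\[
t(K_n, F) = \frac{|H|}{|M|} \leq \frac{|I|}{|M|}.
\]

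There is essentially no obstacle here, since the theorem is an immediate corollary of Lemma 2.2; the only very minor subtlety worth mentioning explicitly is that $|M| > 0$ (which is automatic, as the set of functions from a finite vertex set to a nonempty vertex set is nonempty, assuming $F$ has at least one vertex), so that dividing by $|M|$ is legitimate and preserves the inequality. In writing this up I would keep the argument to a few lines and emphasize that the content of the theorem lies in Lemma 2.2, with this statement serving as the probabilistic reformulation used in the sequel.
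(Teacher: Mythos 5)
Your proposal is correct and matches the paper's own argument exactly: the paper likewise deduces $H \subseteq I$ from Lemma 2.2, concludes $|H| \leq |I|$, and divides by $|M|$ to obtain the stated bound on $t(K_n,F)$. Your added remark about $|M| > 0$ is a harmless (and reasonable) bit of extra care that the paper leaves implicit.
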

An illustration of this theorem is provided in Figure 3.
\begin{figure}[h]
    \centering
    \begin{tikzpicture}
        \filldraw[black] (-2,1) circle (2pt);
        \filldraw[black] (-3,0) circle (2pt);
        \filldraw[black] (-2,-1) circle (2pt);
        \draw[black] (-2,1) -- (-3,0) -- (-2,-1) -- cycle;

        \draw[black,->] (-.5,0) -- (.5,0);

        \filldraw[black] (2,0) circle (2pt);
        \filldraw[black] (3,.5) circle (2pt);
        \filldraw[black] (4,-.5) circle (2pt);
        \filldraw[black] (5,0) circle (2pt);
        \filldraw[black] (3.5,-.5) circle (2pt);
        \filldraw[black] (4.5,.5) circle (2pt);
        \draw[black] (2,0) -- (3,.5) -- (4,-.5) -- (5,0);
        \draw[black] (3,.5) -- (3.5,-.5) -- (4,-.5);
        \draw[black] (4,-.5) -- (4.5,.5);
    \end{tikzpicture}
    \caption{For $G=K_3$ on the left and mapping into the graph $F$ on the left, we have that $|I|=120, |M|=216$. Therefore, by Theorem 2.4, we know that $t(G,F)\leq\frac{120}{216}=\frac{5}{9}$.}
    \label{fig:3}
\end{figure}
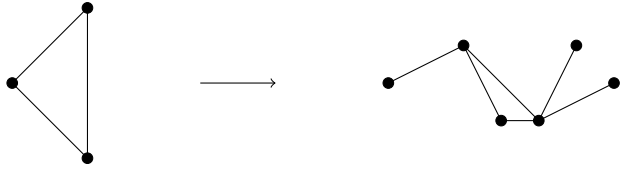

Using Lemma 2.3 and similar logic regarding injective mappings and homomorphisms, we obtain a similar theorem pertaining to when $G$ is our domain and $K_n$ the codomain:

\begin{theorem}
Let $G$ be a fixed but arbitrary graph, and let $n$ be a natural number. Then $t(G,K_n)\geq \frac{|I|}{|M|}$.
\end{theorem}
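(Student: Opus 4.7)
The plan is to mirror the argument used for Theorem 2.4, but with the roles of $I$ and $H$ reversed, exploiting Lemma 2.3 instead of Lemma 2.2. The key observation is that Lemma 2.3 establishes a containment of sets of mappings going in the opposite direction from that used in Theorem 2.4: when the codomain is $K_n$, every injective map is automatically a homomorphism, which gives $I \subseteq H$ rather than $H \subseteq I$.

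First I would fix the graph $G$ and the natural number $n$, and recall the definitions of $M$, $I$, and $H$ relative to the pair $(G, K_n)$. Next I would invoke Lemma 2.3 to note that any $\varphi \in I$ is necessarily a homomorphism, so $\varphi \in H$; this gives the inclusion $I \subseteq H$. Taking cardinalities preserves the inequality, so $|I| \leq |H|$. Dividing both sides by the positive integer $|M|$ yields $\tfrac{|I|}{|M|} \leq \tfrac{|H|}{|M|}$, and the right side is precisely $t(G, K_n)$ by definition, completing the argument.

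There is essentially no obstacle here: the entire content is packaged into Lemma 2.3, and the theorem is a one-line corollary once the containment $I \subseteq H$ is recognized. The only thing worth flagging for the reader is the asymmetry with Theorem 2.4 — in that theorem $K_n$ sat in the domain and the containment was $H \subseteq I$, giving an upper bound; here $K_n$ sits in the codomain and the containment flips, producing a lower bound. A brief remark distinguishing the two situations, and perhaps noting that $|M| = |V(K_n)|^{|V(G)|} = n^{|V(G)|}$ is always nonzero so the division is legitimate, would make the proof fully self-contained.
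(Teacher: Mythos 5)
Your proof is correct and follows exactly the route the paper intends: the paper does not write out a proof of this theorem, but states that it follows from Lemma 2.3 ``and similar logic'' to Theorem 2.4, which is precisely the containment $I \subseteq H$ and division by $|M|$ that you give. Your added remarks on the asymmetry with Theorem 2.4 and on $|M| = n^{|V(G)|} > 0$ are accurate and make the argument more self-contained than the paper's own treatment.
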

For an example of Theorem 2.5, refer to Figure 4 below.
\begin{figure}[h]
    \centering
    \begin{tikzpicture}
        \draw[black,->] (-.5,0) -- (.5,0);

        \filldraw[black] (-2,0) circle (2pt);
        \filldraw[black] (-3,.5) circle (2pt);
        \filldraw[black] (-4,-.5) circle (2pt);
        \filldraw[black] (-5,0) circle (2pt);
        \filldraw[black] (-3.5,-.5) circle (2pt);
        \filldraw[black] (-4.5,.5) circle (2pt);
        \draw[black] (-2,0) -- (-3,.5) -- (-4,-.5) -- (-5,0);
        \draw[black] (-3,.5) -- (-3.5,-.5);
        \draw[black] (-4,-.5) -- (-4.5,.5);
        
        \filldraw[black] (2,0) circle (2pt);
        \filldraw[black] (3,1) circle (2pt);
        \filldraw[black] (5,1) circle (2pt);
        \filldraw[black] (6,0) circle (2pt);
        \filldraw[black] (5,-1) circle (2pt);
        \filldraw[black] (3,-1) circle (2pt);

        \draw[black] (2,0)--(3,1)--(5,1)--(6,0)--(5,-1)--(3,-1)--cycle;
        \draw[black] (2,0)--(5,1)--(5,-1)--cycle;
        \draw[black] (6,0)--(3,1)--(3,-1)--cycle;
        \draw[black] (2,0)--(6,0);
        \draw[black] (3,1)--(5,-1);
        \draw[black] (3,-1)--(5,1);
    \end{tikzpicture}
    \caption{For the graphs $G$ on the left and $F=K_6$ on the right, we have that $|I|=720$ and $|M|=46,656$. Thus, by Theorem 2.5, we know that $t(G,F)\geq\frac{720}{46656}=\frac{5}{324}$.}
    \label{fig:4}
\end{figure}
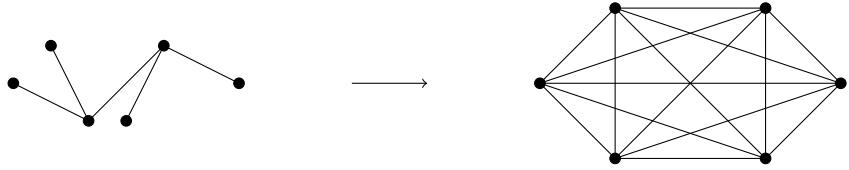

By combining Theorems 2.4 and 2.5, we obtain the following corrollary, addressing the case when both $G$ and $F$ are complete graphs.

\begin{corollary}
Let $G$ be the complete graph on $n$ vertices, $K_n$, and $F$ the complete graph on $m$ vertices, $K_m$. Then $t(K_n,K_m)=\frac{|I|}{|M|}$.
\end{corollary}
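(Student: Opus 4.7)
The plan is to deduce the equality by pinning $t(K_n,K_m)$ between two bounds, one from each of Theorems 2.4 and 2.5. The key observation is that, because $K_n$ and $K_m$ are both complete, the corollary simultaneously sits inside the hypotheses of both theorems: $K_n$ is a legitimate complete-graph domain for Theorem 2.4, while $K_m$ is a legitimate complete-graph codomain for Theorem 2.5. The sets $M$ (all mappings) and $I$ (injective mappings) are the same in both applications, since they depend only on the underlying vertex sets of $K_n$ and $K_m$, not on which theorem we invoke.

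First, I would apply Theorem 2.4 with the arbitrary graph $F$ specialized to $K_m$. This immediately yields the upper bound
\[
t(K_n,K_m) \;\leq\; \frac{|I|}{|M|}.
\]
Next, I would apply Theorem 2.5 with the arbitrary graph $G$ specialized to $K_n$, giving the matching lower bound
\[
t(K_n,K_m) \;\geq\; \frac{|I|}{|M|}.
\]
Chaining the two inequalities forces equality, which is exactly the claim of the corollary.

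There is no real obstacle to overcome; the proof is essentially bookkeeping once both theorems are available. The only point worth making explicit is why the inequalities collapse rather than merely sandwich a nontrivial interval, and this is precisely the content of Lemmas 2.2 and 2.3 working in tandem: Lemma 2.2 forces $H \subseteq I$ (every homomorphism out of $K_n$ is injective), while Lemma 2.3 forces $I \subseteq H$ (every injection into $K_m$ is a homomorphism). When both $G$ and $F$ are complete, these inclusions apply at the same time, so $H = I$ and hence $t(K_n,K_m) = |H|/|M| = |I|/|M|$ on the nose. I would likely mention this observation in passing to clarify why the corollary is essentially a statement about the set-theoretic equality $H = I$ in the all-complete setting.
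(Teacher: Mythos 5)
Your proposal is correct and matches the paper's own argument: the corollary is obtained precisely by combining Theorem 2.4 (with $F = K_m$) and Theorem 2.5 (with $G = K_n$) to sandwich $t(K_n,K_m)$ between $\frac{|I|}{|M|}$ on both sides. Your closing remark that the underlying reason is the set equality $H = I$ (from Lemmas 2.2 and 2.3 applying simultaneously) is a nice clarification consistent with the paper's development.
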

As an example of the corrollary, consider the case when $G=K_4$ and $F=K_5$, illustrated in Figure 5.

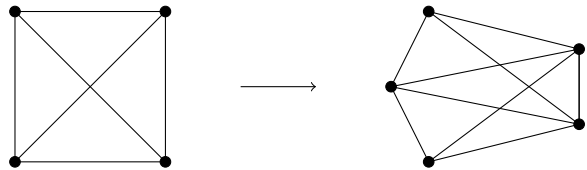
\begin{figure}[h]
    \centering
    \begin{tikzpicture}
        \filldraw[black] (-3.5,1) circle (2pt);
        \filldraw[black] (-3.5,-1) circle (2pt);
        \filldraw[black] (-1.5,1) circle (2pt);
        \filldraw[black] (-1.5,-1) circle (2pt);
        \draw[black] (-3.5,1) -- (-1.5,1) -- (-1.5,-1) -- (-3.5,-1) -- cycle;
        \draw[black] (-3.5,1) -- (-1.5,-1);
        \draw[black] (-1.5,1) -- (-3.5,-1);
        \draw[black,->] (-.5,0) -- (.5,0);
        
        \filldraw[black] (2,-1) circle (2pt);
        \filldraw[black] (1.5,0) circle (2pt);
        \filldraw[black] (2,1) circle (2pt);
        \filldraw[black] (4,.5) circle (2pt);
        \filldraw[black] (4,-.5) circle (2pt);
        \draw[black] (2,-1) -- (1.5,0) -- (2,1) -- (4,.5) -- (4,-.5) -- cycle;
        \draw[black] (2,-1) -- (4,.5);
        \draw[black] (2,1) -- (4,-.5);
        \draw[black] (1.5,0) -- (4,.5) -- (4,-.5) -- cycle;
    \end{tikzpicture}
    \caption{Here, $|I|=120$ while $|M|=625$. Thus, applying Corollary 2.5.1, we know that $t(G,F)=\frac{120}{625}=\frac{24}{125}.$}
    \label{fig:5}
\end{figure}
While the above theorems are quite useful in determining the bounds of $t(G,F)$ when either $G$ or $F$ is a complete graph, they do little to assist us when we consider graphs in general. This is due, in part, to the difficulty of exactly counting individual graph homomorphisms, which, when properly phrased, has been shown to be an NP-hard problem. More detail on the complexity of this problem can be found in \cite{dyer}. To tackle the problem of bounding $t(G,F)$ for general graphs, we first examined how the addition of a single, isolated vertex to $G$ might affect $t(G,F)$. This resulted in Theorem 2.6, stated below with proof.

\begin{theorem}
For a fixed graph $F$, if $G_0$ and $G_1$ are graphs such that\\ $V(G_1)=V(G_0)\cup \{v_1\}$, where $deg(v_1)=0$, and $E(G_1)=E(G_0)$,\\ then $t(G_0,F)=t(G_1,F)$.
\end{theorem}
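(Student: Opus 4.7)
The plan is to show that both the numerator $|H|$ and denominator $|M|$ in the definition of $t(G_1, F)$ grow by exactly the same factor when we pass from $G_0$ to $G_1$, namely the factor $|V(F)|$.

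First I would handle the denominator. Letting $k = |V(G_0)|$ and $n = |V(F)|$, a mapping from a graph with $j$ vertices into $F$ is simply any function on vertex sets, so $|M(G_0, F)| = n^k$ and $|M(G_1, F)| = n^{k+1} = n \cdot |M(G_0, F)|$, because each mapping from $G_0$ to $F$ extends to $n$ mappings from $G_1$ to $F$ (one for each choice of image of $v_1$).

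Next I would establish an analogous relation for the homomorphism counts by constructing an explicit bijection between $H(G_1, F)$ and $H(G_0, F) \times V(F)$. Given a homomorphism $\varphi_1 : G_1 \to F$, restrict it to $V(G_0)$ to obtain a function $\varphi_0$; since $E(G_1) = E(G_0)$, every edge constraint satisfied by $\varphi_1$ is a constraint on $\varphi_0$, so $\varphi_0 \in H(G_0, F)$, and I would send $\varphi_1 \mapsto (\varphi_0, \varphi_1(v_1))$. Conversely, given any $\varphi_0 \in H(G_0, F)$ and any $w \in V(F)$, define $\varphi_1$ on $V(G_1) = V(G_0) \cup \{v_1\}$ by $\varphi_1|_{V(G_0)} = \varphi_0$ and $\varphi_1(v_1) = w$; since $\deg(v_1) = 0$, no new edge in $G_1$ involves $v_1$, so the only edge constraints on $\varphi_1$ are those already satisfied by $\varphi_0$, and hence $\varphi_1 \in H(G_1, F)$. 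These two constructions are mutually inverse, giving $|H(G_1, F)| = n \cdot |H(G_0, F)|$.

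Finally, I would combine the two identities to conclude
\[
t(G_1, F) \;=\; \frac{|H(G_1, F)|}{|M(G_1, F)|} \;=\; \frac{n \cdot |H(G_0, F)|}{n \cdot |M(G_0, F)|} \;=\; t(G_0, F).
\]
There is no real obstacle here; the only subtlety worth stating explicitly is the use of the hypotheses $\deg(v_1) = 0$ and $E(G_1) = E(G_0)$ to guarantee that the edge-preservation condition decouples cleanly between $V(G_0)$ and $\{v_1\}$, which is exactly what makes the bijection above well-defined in both directions.
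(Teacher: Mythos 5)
Your proposal is correct and follows essentially the same route as the paper's proof: both compute $|M_1| = |V(F)|\cdot|M_0|$ from the vertex counts and argue that $|H_1| = |V(F)|\cdot|H_0|$ because the image of the isolated vertex $v_1$ is unconstrained, then cancel the common factor. Your version is slightly more careful in spelling out the restriction/extension bijection that the paper only gestures at, but the underlying argument is identical.
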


\begin{proof}
Begin by observing that, for a fixed graph $F$ and arbitrary $G_0$, $t(G_0,F)=\frac{|H_0|}{|M_0|}$, where $M_0$ represents all possible mappings from $G_0$ into $F$ and $H_0$ represents those mappings that are also homomorphisms. Then $|M_0|={|V(F)|}^{|V(G_0)|}$.

Now construct $G_1$ by adding a single vertex $v_1$ to $G_0$ such that $deg(v_1)=0$, and observe that $|M_1|={|V(F)|}^{|V(G_1)|}={|V(F)|}^{|V(G_0)|+1}=|V(F)|\cdot |M_0|$. Further, since $E(G_0)=E(G_1)$, the only differences between $H_0$ and $H_1$ will be the choices of $\varphi(v_1)$ for any $\varphi\in H_1$. Since $deg(v_1)=0$, $\varphi(v_1)$ may be any of the $|V(F)|$ vertices of $F$. Therefore, $|H_1|=|V(F)|\cdot |H_0|$.

\noindent Thus, $t(G_1,F)=\frac{|H_1|}{|M_1|}=\frac{|V(F)|\cdot |H_0|}{|V(F)|\cdot |M_0|}=\frac{|H_0|}{|M_0|}=t(G_0,F)$.
\end{proof}
\noindent By virtue of Theorem 2.6, we may safely assume that our general graphs $G$ have no isolated vertices, thus simplifying the problem of counting homomorphisms from $G$ to $F$.

\section{Conclusion}
This paper began with the study of homomorphism densities between two graphs. We produced a set of inequalities that bound $t(G,F)$ when either $G$ or $F$ is a member of the special class of graphs known as complete graphs. We also showed that a homomorphism is guaranteed (i.e. $t(G,F)=1$) only when all vertices of $G$ are isolated. Additionally, we provided a theorem that allows us to simplify the difficult problem of exactly counting graph homomorphisms by ignoring any isolated vertices in $G$. Finally, this investigation led us to the notion of folded graphs, which have become our current focus.

\newpage
\bibliographystyle{plain}
\bibliography{mybib}
\end{document}